\documentclass[sn-mathphys-num]{sn-jnl}

\usepackage{graphicx}%
\usepackage{multirow}%
\usepackage{amsmath,amssymb,amsfonts, tikz-cd}%
\usepackage{amsthm}%
\usepackage{mathrsfs}%
\usepackage[title]{appendix}%
\usepackage{xcolor}%
\usepackage{textcomp}%
\usepackage{manyfoot}%
\usepackage{booktabs}%
\usepackage{algorithm}%
\usepackage{algorithmicx}%
\usepackage{algpseudocode}%
\usepackage{listings}%

\newtheorem{theorem}{Theorem}[section]

\newtheorem{definition}[theorem]{Definition}

\newtheorem{lemma}[theorem]{Lemma}

\newtheorem{remark}[theorem]{Remark}

\begin{document}

\title{Norm Principles for  ${\rm GO}(q)$ and ${\rm Spin}(q)$}


\author*[1]{\fnm{Priyabrata} \sur{Mandal}}\email{p.mandal@manipal.edu}



\affil*[1]{\orgdiv{Department of Mathematics}, \orgname{Manipal Institute of Technology, Manipal
Academy of Higher Education}, \orgaddress{ \city{Manipal}, \postcode{576104}, \state{Karnataka}, \country{India}}}




\abstract{
	 The aim of this article is to discuss the proof that the norm principle holds for the group of similitudes ${\rm GO}(q)$ and the spinor group ${\rm Spin}(q)$ for a quadratic form $q$ defined over $F$.
}

\keywords{Quadratic forms, Norm principles, Field norm}


\pacs[MSC Classification]{11E10, 11E81, 12D15, 16K20}

\maketitle

\begin{section}
{Introduction:}
In this paper, we explore norm principles for connected reductive algebraic groups defined over a field $F$, where char $F \neq 2$. Consider a commutative linear algebraic group $T$ defined over $F$, and let $K/F$ be a finite separable field extension. The $norm\ homomorphism\ N_{K/F}$, is defined as
\[
N_{K/F}: T(K) \to T(F)\]
by sending $t \mapsto \prod_{\gamma } \gamma(t)$, where $\gamma \in \frac{{\rm Gal}(F^{\rm sep}/F)}{{\rm Gal}(F^{\rm sep}/K)}$ and $F^{\rm sep}$ be the separable closure of $F$.

\

If $T$ = $\mathbb {G}_m$, then $N_{K/F}$ is the usual field norm $N_{K/F} : K^\times \to F^\times$.

Suppose $G(F)$ is a connected linear algebraic group over $F$, and let $T(F)$ be a commutative linear algebraic group also defined over $F$. Let $\phi : G(F) \to T(F) $ be an algebraic group homomorphism defined over $F$. For a finite separable extension $K/F$, let $\phi_K$ be the algebraic group homomorphism from $G(K)$ to $T(K)$. 

Consider the following diagram :\begin{center}
\begin{tikzcd}
	G(K) \arrow[r, "\phi_K"] 
	& T(K) \arrow[d, "N_{K/F}"] \\
	G(F) \arrow[r,"\phi"]
	& T(F)
\end{tikzcd}
\end{center}
The norm principle is said to hold for $\phi$ if  
\begin{equation}\label{eq1} 
N_{K/F}(\phi_K (G_K)) \subseteq \phi(G(F)).
\end{equation} 
If, for every separable field extension $K/F$, equation (\ref{eq1}) holds, then the norm principle is said to hold for $\phi : G \to T$.

The objective of this paper is to discuss the proof that the norm principle applies to the group of similitudes ${\rm GO}(q)$ and the spinor group ${\rm Spin}(q)$ for a quadratic form $q$ defined over $F$.

\end{section}
\section{Preliminaries:} \label{prelim}
All fields under consideration are assumed to have a characteristic distinct from $2$. We denote the set of all non-zero elements of $F$ by $F^\times$, i.e., $F^\times=F \setminus \{0\}$.
\medskip


Let $V$ be any finite dimensional $F$-vector space.
Recall that, by a {\it bilinear form} $b$ on $V$, we mean a map $b: V \times V \to F$ satisfying the following: 
\begin{enumerate}

    \item $b(\alpha x+ \beta y,z)=\alpha b(x,z)+ \beta b(y,z).$
    \item $ b(x,\beta y+ \gamma z)=\beta b(x,y)+\gamma b(x,z)$, for all $x,y,z \in V$ and for $\alpha, \beta, \gamma \in F$.
\end{enumerate}
A bilinear form $b$ is said to be symmetric if $b(x,y)=b(y,x)$. In this paper, by a bilinear form we mean a symmetric bilinear form.

A map $q:V \to F$ is said to be a {\it quadratic form} on $V$ if 
\begin{enumerate}
    \item $q(\alpha x)=\alpha^2 q(x)$.
    \item $b_q(x,y)=\frac{1}{2}(q(x+y)-q(x)-q(y)$ is a (symmetric) bilinear form on $V$.
\end{enumerate}
We call $b_q$ be the bilinear form associated to the quadratic form $q$. Also, given a bilinear form $b$ on $V$, one can associate a quadratic form $q_b$ to $b$ by defining $q_b(x):= b(x,x)$.
We refer to (Chapter 1, \cite{sch}) for more details.
The form $q:V\to F$ is said to be {\it regular} if $b_q:V \times V \to F$ is nondegenerate.
We call $(V,q)$ a quadratic space. Two quadratic spaces $(V_1, q_1) \text{ and } (V_2,q_2)$ is said to be {\it isometric} if there is an isomorphism $\sigma: V_1 \to V_2$ satisfying $q(x)=q_1(\sigma(x))$. We then denote $V_1 \cong V_2$. For a $2$ dimensional quadratic space $(V,q)$, if $V \cong \langle 1,-1 \rangle $, then we call $V$ a {\it hyperbolic plan} and denote by $\mathbb{H}$. A {\it hyperbolic space} is an orthogonal sum of hyperbolic planes. A quadratic form $q$ can always be diagonalized as $q=\langle d_1\rangle \perp \dots\perp \langle d_n \rangle = \langle d_1,\dots,d_n \rangle$.

\medskip

For a quadratic space $(V,q)$ over $F$, if there exists a non-zero $x \in V$ such that $q(x)=0$, then we say $q$ is isotropic over $F$. Otherwise, $q$ is called {\it anisotropic}. For $n \in \mathbb N$, let $n.q$ denotes the $n$-fold orthogonal sum of $q$. If $n.q$ is isotropic, then we say $q$ is {\it weakly isotropic} over $F$. A $2$-dimensional quadratic form $q$ is said to be {\it hyperbolic} if $q \cong \langle 1,-1 \rangle$. By Witt decomposition theorem (see Chapter 1, Theorem 4.1, \cite{lam}), any quadratic form $q$ can be written as $q=q_h \perp q_a$, where $q_h$ denotes the hyperbolic part of $q$ and $q_a$ denotes the anisotropic part of $q$.
\medskip

 




For a quadratic space $(V,q)$ over $F$, let $D(q)$ denote the set of elements in $  F^\times$ represented by $q$, i.e.,
$D(q)=\{ x \in F^\times |\ \text{there exists } v \in V \text{such that }q(v)=x \}.$ 

.  

    \begin{lemma}\label{square class}
        Let $(V,q)$ be a quadratic space over $F$. If $\alpha,\ x \in F^\times$, then $x \in D(q)$ if and only if $\alpha^2 x \in D(q)$.
    \end{lemma}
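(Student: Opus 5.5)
The plan is to use only the homogeneity axiom $q(\alpha x)=\alpha^2 q(x)$ from the definition of a quadratic form, together with the fact that $V$ is an $F$-vector space, so it is closed under scalar multiplication. Nothing from the later structure theory (diagonalization, Witt decomposition) should be needed.

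For the forward direction, I suppose $x\in D(q)$. Then there is some $v\in V$ with $q(v)=x$. I set $w=\alpha v\in V$ and compute $q(w)=\alpha^2 q(v)=\alpha^2 x$. Since $\alpha\neq 0$ and $x\neq 0$, we have $\alpha^2x\in F^\times$, so $\alpha^2 x\in D(q)$.

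For the converse, I suppose $\alpha^2 x\in D(q)$, witnessed by some $u\in V$ with $q(u)=\alpha^2 x$. Because $\alpha\in F^\times$, the inverse $\alpha^{-1}$ exists. I apply the forward direction with $\alpha^{-1}$ in place of $\alpha$ and $\alpha^2 x$ in place of $x$. Concretely, $q(\alpha^{-1}u)=\alpha^{-2}\alpha^2 x=x$, and so $x\in D(q)$.

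There is no real obstacle here. The only point needing care is checking that the elements stay in $F^\times$, since $D(q)$ is defined as a subset of $F^\times$. This holds because $F^\times$ is a group under multiplication and $\alpha^2\in F^\times$.
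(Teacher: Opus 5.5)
Your proof is correct and matches the paper's argument: you scale a witness $v$ by $\alpha$ for the forward direction and by $\alpha^{-1}$ for the converse, using $q(\alpha v)=\alpha^2 q(v)$. Your extra check that the values stay in $F^\times$ is a small point the paper leaves implicit.
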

    \begin{proof}
        Suppose, $x \in D(q)$, there exists $v \in V$ such that $q(v)=x$. For $\alpha \in F^\times$, $\alpha v \in V$. Since $q$ is a quadratic form, we have $q(\alpha.v)= \alpha^2 q(v)= \alpha^2x$. Therefore, $\alpha^2x \in D(q)$.
        Conversely, if $\alpha^2 x \in D(q)$, then there exists $v \in V$ such that $q(v)=\alpha^2x$. Hence, $q(\frac{1}{\alpha})v=\frac{1}{\alpha^2}q(v)=x$. Thus $x\in D(q)$.
    \end{proof}
   \begin{remark} \normalfont
       
   From the above lemma \ref{square class}, it is easy to see that $D(q)$ consists of a union of cosets of $F^\times/F^{\times 2}$. In general, $D(q)$ need not be a subgroup of $F^\times$. If it forms a subgroup, then we call $q$ a {\it group form} over $F$ (see Chapter 1, \S 2, \cite{lam}). However, every element in $D(q)$ has an inverse in $D(q)$. Indeed, for $x \in D(q)$, one can write 
    \[x^{-1}= (x^{-1})^2.\ x \in D(q).\]
     \end{remark}
    






\medskip 

We next discuss construction of Witt rings.
Consider a commutative cancellation monoid $(M,+)$. Define a relation $ \sim $ on $M \times M$ by
\[(x,y)\ \sim\ (x',y') \iff x+y'=x'+y \in M.\] 
Now consider $(M \times M)/ \sim$. We denote the equivalence class of $(x,y)$ by $[x,y]$. Define an addition by \[[x,y]+ [x',y']:= [x+x',y+y']\] 
This is clearly well defined, associative and commutative.
The classes $[x,y], [y,x]$ are additive inverses to each other, making it a group, called a {\it Grothendieck group} and denoted by ${\rm Groth} (M)$. If $M$ has a multiplication which makes it a semiring, then by defining the product
\[[x,y][x',y']:=[xx'+yy', xy'+yx'],\] ${\rm Groth}(M)$ turns into a ring.
\medskip


\begin{definition}
Let $M(F)$ be the set of all isometry classes of regular quadratic forms over $F$. Then $\widehat{W}(F)= Groth(M(F))$ is called the $Witt$-$Grothendieck$ ring of quadratic forms over the field $F$.
The Witt ring $W(F)$ is obtained by quotienting the ring $\widehat{W}(F)$ by the ideal generated by the hyperbolic spaces, namely $\mathbb{Z \cdot H}$, $i.e$,
$W(F)= \widehat{W}(F) / \mathbb{Z \cdot H}$.
\end{definition}
\medskip 
\begin{theorem}
    The elements of $W(F)$ are in one-one correspondence with the isometry classes of all anisotropic forms over $F$.
\end{theorem}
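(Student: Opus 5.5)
The plan is to build the map from anisotropic forms to $W(F)$, then prove it is surjective and injective. Send an anisotropic form $q$ to its class $[q] - [0]$ in $\widehat{W}(F)$, reduced modulo the ideal $\mathbb{Z}\cdot\mathbb{H}$. The tools are the Witt decomposition $q = q_h \perp q_a$ recalled above (from \cite{lam}) and Witt's cancellation theorem: $q_1 \perp q \cong q_2 \perp q$ implies $q_1 \cong q_2$. We also need the uniqueness part of the Witt decomposition. If $m\mathbb{H}\perp q_a \cong n\mathbb{H}\perp q_a'$ with $q_a, q_a'$ anisotropic, then $m=n$ and $q_a \cong q_a'$. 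This follows from cancellation: cancel $\min(m,n)$ copies of $\mathbb{H}$. If, say, $m>n$, we get $(m-n)\mathbb{H}\perp q_a \cong q_a'$, which is isotropic, a contradiction.

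\textbf{Surjectivity.} A general element of $\widehat{W}(F)$ is $[x,y]$ with $x,y$ isometry classes of regular forms. First I check that $-[y] \equiv [\langle -1\rangle y]$ modulo $\mathbb{Z}\cdot\mathbb{H}$. Diagonalizing $y=\langle d_1,\dots,d_n\rangle$, we have $y\perp(-y)\cong\perp_i \langle d_i,-d_i\rangle \cong n\mathbb{H}$, since $\langle d,-d\rangle$ represents $0$ nontrivially and is regular of dimension $2$, hence hyperbolic. Therefore $[x,y]\equiv[x\perp -y, 0]$ modulo hyperbolic classes. Writing $x\perp -y = q_h\perp q_a$ by the Witt decomposition, this class equals the class of $q_a$ in $W(F)$. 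So every element of $W(F)$ is represented by an anisotropic form.

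\textbf{Injectivity.} This is the step I expect to be the main obstacle, because one has to be careful about what the ideal generated by $\mathbb{H}$ actually looks like inside the Grothendieck ring. First I show that the ideal generated by $[\mathbb{H}]$ in $\widehat{W}(F)$ equals $\{k[\mathbb{H}] : k\in\mathbb{Z}\}$. It suffices to check that $\langle a\rangle\otimes\mathbb{H}\cong\mathbb{H}$, since every form is diagonalizable and $[x,y]\cdot[\mathbb{H}]$ then reduces to an integer multiple of $[\mathbb{H}]$. Now suppose anisotropic $q_1,q_2$ have the same image in $W(F)$. Then $[q_1,0]-[q_2,0]=k[\mathbb{H}]$ for some $k\in\mathbb{Z}$. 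Unwinding the equivalence relation in the Grothendieck construction, and swapping $q_1,q_2$ if needed so that $k\ge 0$, this says $q_1\perp z \cong q_2\perp k\mathbb{H}\perp z$ for some regular form $z$. Witt cancellation, valid in $M(F)$, removes $z$ and gives $q_1\cong q_2\perp k\mathbb{H}$. The uniqueness of the Witt decomposition then forces $k=0$ and $q_1\cong q_2$. This also confirms that the map is well defined on isometry classes, so it is a bijection.
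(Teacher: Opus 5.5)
Your proposal is correct and follows essentially the same route as the paper. Surjectivity comes from rewriting $[x]-[y]$ as $[x\perp -y]$ modulo hyperbolic classes (via $\langle a,-a\rangle\cong\mathbb{H}$) and discarding the hyperbolic part of the Witt decomposition, and injectivity comes from turning equality in $W(F)$ into $q_1\cong q_2\perp k\mathbb{H}$ and using anisotropy to force $k=0$; your version is more careful than the paper's, since you verify that the ideal generated by $\mathbb{H}$ is $\mathbb{Z}\cdot\mathbb{H}$, make explicit the Witt cancellation implicit in the paper's Grothendieck construction for a cancellation monoid, and handle the sign of $k$ by symmetry.
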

\begin{proof}
    Any element in $\widehat W(F)$ can be written as $[q_1]-[q_2]$, where $[q_1], [q_2]$ are isometry classes of regular quadratic forms of $q_1$ and $q_2$ respectively. We claim that any element in $W(F)$ is of the form $[q]$, where $q$ is a regular quadratic form over $F$. Indeed, $[q_1]-[q_2]= [q_1 \perp (-q_2)]-[q_2 \perp (-q_2)]$. Now, for any scalar $a \in F^\times$, 
    \begin{equation} \label{witt ring equation}
          \langle a \rangle \perp \langle -a \rangle \cong \langle a, -a \rangle \cong a \langle 1,-1 \rangle = a \mathbb H = 0 \in W(F)
    \end{equation}
    which implies $- \langle a \rangle= \langle -a \rangle \in W(F)$. Note that, $[q_2 \perp (-q_2)]$ is a hyperbolic form over $F$ and hence it becomes zero over $W(F)$. Therefore, $[q_1]-[q_2]= [q_1 \perp (-q_2)] \in W(F)$. If there is any common scalar in the diagonalization of $q_1$ and $q_2$, then by the above equation \ref{witt ring equation}, it denote the zero element in $W(F)$. In particular, every element in $W(F)$ is represented by a form $[q]$. By Witt decomposition theorem, every quadratic form $q$ can be written as $q=q_h \perp q_a$, where $q_h$ denotes the hyperbolic part of $q$ and $q_a$ denotes the anisotropic part of $q$. Therefore, $[q]$ and $[q_a]$ denotes the same element in $W(F)$. To prove the one-one correspondence, we need to show if $[q]$ and $[\phi]$ be two regular quadratic forms such that $[q]= [\phi ] \in W(F)$, then $q \cong \phi$. If $[q]= [\phi ] \in W(F)$, then $[q]= [\phi] \perp m \mathbb H \in \widehat W(F)$ for some integer $m \geq 0$. Hence, $q \cong \phi \perp m \mathbb H$ as quadratic forms. Since, $q$ is anisotropic, it cannot contain a hyperbolic subpart in it, and so $m=0$. Thus, $q \cong \phi$. This completes the proof.
\end{proof}


\medskip

\section{Norm principles} 
Consider a field extension $K$ of $F$. Let $V$ be a vector space over $F$. Then $V$ can be considered as a vector space over $K$, given by $V_K:=V \otimes_F K$. Thus any $F$-quadratic space $(V,q)$ can also considered as a $K$-quadratic space $(V_K, q_K)$, where $q_K=q \otimes_F K$.
\subsection{Scharlau's norm principle}
Suppose $K/F$ is a field extension. If there exists a non-zero linear functional $s:K \to F$, then for any $K$-quadratic space $(W, \phi)$, we can construct a $F$-quadratic space $s_*(W):=(W, s\phi)$. Moreover, if $K/F$ is finite, ${\rm dim}_F\ s_*(W)=[K:F] \cdot {\rm dim}_K(W)$. 
We next state the Frobenius Reciprocity theorem (see \cite{lam}, Chapter 7, Theorem 1.3).
\medskip 

\begin{theorem}\label{Frobenius reciprocity}
Let $K/F$ be a field extension. Let $(V,q)$ be a quadratic space over $F$ and $(W, \phi)$ be a quadratic space over $K$. Then there exists an $F$-isometry
 \[s_*(V_K \otimes _K W) \cong V \otimes _F s_*(W)\]
\end{theorem}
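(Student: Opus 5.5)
The plan is to write down the obvious map and check three things: it is well defined, it is an $F$-linear isomorphism, and it preserves the quadratic forms. Since $V_K = V\otimes_F K$, there is a canonical $F$-linear (in fact $K$-linear) identification
\[
V_K\otimes_K W = (V\otimes_F K)\otimes_K W \cong V\otimes_F W ,
\qquad (v\otimes \lambda)\otimes w \mapsto v\otimes \lambda w .
\]
Its inverse is $v\otimes w\mapsto (v\otimes 1)\otimes w$. Both maps are well defined by the universal property of the tensor product, since the defining expressions are balanced in the appropriate arguments. We define $\sigma: s_*(V_K\otimes_K W)\to V\otimes_F s_*(W)$ as this map. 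As $F$-vector spaces both sides are $V\otimes_F W$, so $\sigma$ is an $F$-linear bijection. When $K/F$ is finite, the dimensions also agree, with common value $\dim_F V\cdot[K:F]\cdot\dim_K W$; this is consistent with the dimension formula stated above.

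It remains to show that $\sigma$ is an isometry, and this is the only real content. It suffices to compare the associated symmetric bilinear forms on pure tensors, because both forms are $F$-bilinear and pure tensors span $V\otimes_F W$ over $F$.

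On the left side, the bilinear form of $q_K\otimes\phi$ on $V_K\otimes_K W$ satisfies
\[
b\big((v\otimes 1)\otimes w,\ (v'\otimes 1)\otimes w'\big) = b_{q_K}(v\otimes 1, v'\otimes 1)\, b_\phi(w,w') = b_q(v,v')\, b_\phi(w,w') \in K .
\]
Applying the transfer $s$ gives $s\big(b_q(v,v')\,b_\phi(w,w')\big)$.

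On the right side, the form of $q\otimes s\phi$ gives $b_q(v,v')\cdot s(b_\phi(w,w'))$. Here $b_q(v,v')$ lies in $F$, and $s$ is $F$-linear, so the two values are equal.

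Hence the bilinear forms agree on a spanning set, and by bilinearity they agree everywhere. Since the characteristic is not $2$, each quadratic form is recovered from its bilinear form via $q(x)=b_q(x,x)$, so the quadratic forms agree too. Therefore $\sigma$ is an $F$-isometry.

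The main point to be careful about is bookkeeping rather than difficulty. One must use that the transfer of a $K$-bilinear form is the $F$-bilinear form $s\circ b$, and that the tensor product of forms is defined on pure tensors by multiplying the values. Both identifications are to be checked at the level of bilinear forms before passing to quadratic forms.
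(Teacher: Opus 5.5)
Your proof is correct: the canonical identification $(V\otimes_F K)\otimes_K W\cong V\otimes_F W$, followed by the check $s\bigl(b_q(v,v')\,b_\phi(w,w')\bigr)=b_q(v,v')\,s\bigl(b_\phi(w,w')\bigr)$ on pure tensors (using $b_q(v,v')\in F$ and the $F$-linearity of $s$), is all that is needed. The paper does not prove this statement itself but cites Lam (Chapter 7, Theorem 1.3), and your argument is the standard proof found there.
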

\medskip 

Consider the field extension $K/F$ of degree $n$, where $K=F(x)$. Let $ \{1,x,\dots,x^{n-1} \}$ be an $F$-basis on $K$. So, we have a unique $F$-linear functional $s : K \to F$ given by    $s(1)=1\text{ and } s(x)=s(x^2)=\dots=s(x^{n-1})=0$. Then by (\cite{lam}, Chapter 7, Corollary 2.4), 
\begin{equation} \label{scharlau transfer}
    s_*(\langle 1,-x \rangle  ) =  \langle 1, -N_{K/F}(x) \rangle \in W(F).
\end{equation}



Recall that, two $F$-quadratic forms $q_1$ and $q_2$ are called {\it proportional} if $q_1 \cong \alpha. q_2$ for some $\alpha \in F^\times$ (see \cite{sch}, Chapter 2, Definition 8.4). In particular, if there exists $\alpha \in F^\times$ such that $q_1 \cong \alpha. q_1$, then $\alpha$ is said to be a {\it proportionality (similarity) factor} of $q_1$. For a quadratic form $q$ over a field $F$, consider the set
\[G(q):= \{ \alpha \in F^\times: \alpha.q \cong q\}\]
It is easy to see that $G(q)$ is a subgroup of $F^\times$. It is called the group of {\it proportionality (similarity) factors} of $q$. 
For any $\alpha \in F^{\times 2}$, by the property of quadratic forms, we have $\alpha.q \cong q$. Thus, $\alpha \in G(q)$. In particular, $F^{\times 2} \subseteq G(q)$. We now discuss Scharlau's norm principle.
\medskip

\begin{theorem}[Scharlau] \label{scharlau norm principle}
Let $K$ be a finite field extension of $F$ and $q$ be a regular quadratic form over $F$. Then for any $x \in K^\times$, the following inclusion holds 
    \[x \in G(q_K) \implies N_{K/F}(x) \in G(q).\]
    In other words, 
    \[N_{K/F}(G(q_K)) \subseteq G(q)\]
\end{theorem}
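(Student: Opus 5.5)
The plan is the standard transfer argument, built from the Frobenius reciprocity of Theorem \ref{Frobenius reciprocity} and the transfer formula (\ref{scharlau transfer}).

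\textbf{Reduction to a simple extension.} Norms are transitive in towers: $N_{K/F}=N_{L/F}\circ N_{K/L}$ for $F\subseteq L\subseteq K$. Moreover, $x$ lies in $G(q_{F(x)})$ if and only if it lies in $G(q_K)$, since a similarity factor of the base form is unaffected by base change. Also $N_{K/F}(x)=N_{F(x)/F}(x)^{[K:F(x)]}$. So it suffices to treat $K=F(x)$ and show $N_{F(x)/F}(x)\in G(q)$. For $K\neq F(x)$, the exponent $[K:F(x)]$ is harmless because $G(q)$ is a group, so powers of elements of $G(q)$ stay in $G(q)$.

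\textbf{Main step: the transfer.} Take $K=F(x)$ of degree $n$, and let $s$ be the functional with $s(1)=1$ and $s(x^i)=0$ for $0<i<n$. The hypothesis $x\in G(q_K)$ says $x\cdot q_K\cong q_K$. Hence, over $K$,
\[
q_K\otimes\langle 1,-x\rangle \cong q_K\perp -x q_K \cong q_K\perp -q_K,
\]
which is hyperbolic. Transfer preserves hyperbolicity, since $s_*(\mathbb H)$ is hyperbolic: $s_*$ of a metabolic space is metabolic. Therefore $s_*(q_K\otimes\langle 1,-x\rangle)=0$ in $W(F)$. By Theorem \ref{Frobenius reciprocity}, this space is $F$-isometric to $q\otimes s_*(\langle 1,-x\rangle)$. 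By (\ref{scharlau transfer}), its class in $W(F)$ is
\[
q\otimes\langle 1,-N_{K/F}(x)\rangle = q\perp -N_{K/F}(x)\,q .
\]
So $q\perp -N_{K/F}(x)q=0$ in $W(F)$.

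\textbf{Back to isometry.} The form $q\perp -N_{K/F}(x)q$ has dimension $2\dim q$ and is zero in $W(F)$, so it is hyperbolic, namely $(\dim q)\mathbb H$. Since $q\perp -q\cong(\dim q)\mathbb H$ as well, we get
\[
q\perp -N_{K/F}(x)q\cong q\perp -q .
\]
Witt cancellation (Chapter 1 of \cite{lam}) then gives $N_{K/F}(x)q\cong q$, i.e. $N_{K/F}(x)\in G(q)$.

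\textbf{Main obstacle.} The delicate point is that (\ref{scharlau transfer}) is an equality only in $W(F)$, not an isometry. We must therefore pass through the Witt ring and recover an isometry at the end. That is why the dimension count together with the theorem identifying $W(F)$ with anisotropic classes is needed. Concretely, a form of dimension $2m$ that is zero in $W(F)$ has trivial anisotropic part, hence is $m\mathbb H$, and then cancellation applies.
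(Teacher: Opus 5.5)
There is a genuine gap in your reduction to $K=F(x)$. Your transfer step is fine. So is your final passage from $W(F)$ back to an isometry via the dimension count and Witt cancellation; it makes explicit what the paper leaves implicit.

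The problem is the claim that $x\in G(q_K)$ if and only if $x\in G(q_{F(x)})$. Only the inclusion $G(q_{F(x)})\subseteq G(q_K)$ is automatic. Base change can create new similarity factors.

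For example, take $F=\mathbb{Q}$ and $q=\langle 1\rangle$, so $G(q)=F^{\times 2}$. Let $K=\mathbb{Q}(\sqrt[4]{2})$ and $x=\sqrt{2}$, so $E=F(x)=\mathbb{Q}(\sqrt{2})$.
\begin{itemize}
\item We have $x=(\sqrt[4]{2})^2\in K^{\times 2}=G(q_K)$.
\item But $\sqrt{2}$ is not a square in $E$, so $x\notin G(q_E)$.
\item Worse, $N_{E/F}(x)=-2\notin G(q)$. The statement you reduce to is false here, and raising it to the power $[K:F(x)]$ afterwards cannot rescue the argument.
\end{itemize}
The theorem itself survives in this example only because $N_{K/F}(x)=4$ is a square.

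The missing idea is a split on the parity of $[K:F(x)]$, which is how the paper proceeds.
\begin{itemize}
\item If $[K:F(x)]$ is even, then $N_{K/F}(x)=N_{F(x)/F}(x)^{[K:F(x)]}$ is a square. It lies in $F^{\times 2}\subseteq G(q)$ with no transfer needed.
\item If $[K:F(x)]$ is odd, the descent you want is true but nontrivial. It rests on Springer's theorem in the form that $W(F(x))\to W(K)$ is injective for odd-degree extensions (Lam, Chapter~7, Theorem~2.5). Then $\langle 1,-x\rangle\otimes q_K=0$ in $W(K)$ forces $\langle 1,-x\rangle\otimes q_{F(x)}=0$ in $W(F(x))$, i.e.\ $x\in G(q_{F(x)})$.
\end{itemize}
Only after this step does your transfer argument over $F(x)/F$ apply. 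Your remark that powers of elements of $G(q)$ stay in $G(q)$ then correctly finishes the odd case.
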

\begin{proof} 
Let $x \in K^\times $. Consider the intermediate field $F(x)$ with $F \subseteq F(x)\subseteq K $. For convenience, let us denote the field $F(x)$ by $E$.

\textbf{Case (i):} Suppose $K/E$ is an even degree extension, say, $[K:E]=2m$. Then $N_{K/E}(x)=x^{2m}$.
By the multiplicative property of norm, we have
    \[N_{K/F}(x)=N_{E/F}\cdot (N_{K/E}(x))=N_{E/F} \cdot (x^{2m})=(N_{E/F}(x^m))^2 \in F^{\times 2} \]
Therefore, $N_{K/F}(x) \in G(q)$.
\medskip 

\textbf{Case (ii):} Suppose, $[K:E]=2m+1$, an odd degree extension. As $x \in G(q_K)$, we have $x.\ q_K \cong q_k$, i.e., $\langle 1, -x \rangle \otimes_K q_K=0 \in W(K)$. By (\cite{lam}, Chapter 7, Theorem 2.5), the map $W(E) \to W(K)$ is injective. 
Hence $\langle 1,-x\rangle\otimes_E q_{E}=0 \in W(E))$ and so $x \in G(q_{E})$. Now, 
\begin{align*}
N_{K/F}(x)= &N_{E/F}\cdot (N_{K/E}(x))\\= &N_{E/F} \cdot (x^{2m+1})\\ 
= &N_{E/F}(x) \cdot  (N_{E/F}(x^m))^2
\end{align*}
Thus if we show that $N_{E/F}(x) \in G(q)$, we are done. So we can assume $K=F(x)$ and let $s: K \to F$ be the unique $F$-linear functional defined as earlier this section. 
Applying the transfer $s_*$ to the equation $\langle 1,-x \rangle   \otimes_K \ q_K =0 \in W(K)$, we get
\begin{align*}
    0=\ & s_*(q_K \otimes _K \langle 1,-x \rangle  _{K})\\ \cong\ & q \otimes _F s_*(\langle 1,-x \rangle  _{K})\ ( \text{by Theorem } \ref{Frobenius reciprocity})\\
    \cong\ & q\ \otimes _F \langle 1,-N_{K/F}(x) \rangle  \ \in W(F)\ (\text{by Equation } \ref{scharlau transfer})
\end{align*}
Hence $\langle 1,-N_{K/F}(x) \rangle \otimes_F q =0 \in W(F)$\ and so $N_{K/F}(x) \in G(q)$.
\end{proof}
\medskip 
\begin{remark}
    Let ${\rm GO}(q)$ be the group of similitudes and $\phi: {\rm GO}(q) \to \mathbb G_m$ be the multiplier homomorphism (see \cite{kmrt}, Chapter 3 for more details). The norm principle for $\phi$ readily follows from the above Theorem \ref{scharlau norm principle} (see \cite{bm}, example 3.3).
\end{remark}
\medskip 

\subsection{Knebusch's norm principle}
Recall the notion of $D(q)$ for a $F$-quadratic space $(V,q)$ defined in section \ref{prelim} as follows
\[D(q)=\{ d \in F^\times |\ \text{there exists } v \in V \text{such that }q(v)=d \}.\]  
In the earlier section \ref{scharlau norm principle}, we discuss the group of similarity factors $G(q)$ with respect to the norm map for a finite field extension. It will be of interest to establish a parallel result for $D(q)$ also. But, the main problem is $D(q)$ need not be a subgroup of $F^\times$. In fact, $D(q)$ need not be closed under multiplication. For example, consider the quadratic form $q= \langle 1,1,1 \rangle = x^2+y^2+z^2$ over $\mathbb Q$. Then clearly, $1,2,2^{-1},14 \in D(q)$. But the product $2^{-1}.14=7 \notin D(q)$ as $7$ can not be expressed as a sum of three squares over $\mathbb Q$. 
\medskip

\begin{theorem} [Knebusch] \label{knebusch norm principle}
Suppose $K/F$ is a finite extension of degree $n$ and $q$ is a regular quadratic form over $F$. Let $x \in K^\times.$ If $x \in D(q_K)$, then $N_{K/F}(x) \in D^n(q)$\ (i.e., it is a product of $n$ elements of $D(q))$.
\end{theorem}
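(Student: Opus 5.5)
The plan is the classical polynomial argument. It goes through the simple-extension case $K=F(x)$ and uses Cassels--Pfister-type facts about values of $q$ over a polynomial ring. The main tool, which I would invoke as a standard result (Lam, Chapter 9), is the \emph{polynomial} Knebusch lemma. If $p(t)\in F[t]$ is a monic irreducible polynomial of degree $m$ and $p(t)$ is represented by $q$ over $F(t)$ up to some care, then $p$ is a product of $m$ values of $q$ in the appropriate sense.

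\textbf{Step 1: reduction to $K=F(x)$.} Put $E=F(x)$ and $K\supseteq E$ with $[K:E]=r$. Since $x\in E$, we have $N_{K/F}(x)=N_{E/F}(x)^r$. Once $N_{E/F}(x)\in D^{[E:F]}(q)$ is known, the element $N_{E/F}(x)^r$ is a product of $r[E:F]=n$ elements of $D(q)$, as required. One point needs care: $x\in D(q_K)$ does not obviously give $x\in D(q_E)$, since $K/E$ may have even degree. I would therefore argue directly in $K$ over its primitive-element structure. That is, I would prove the stronger statement by induction on $n$ using a representation $q_K(v)=x$ with $v\in V\otimes K$. I would write $K=F(\theta)$ (assuming separability, or else treat a tower of simple extensions and use multiplicativity $D^a(q)^{b}\subseteq D^{ab}(q)$ together with transitivity of norms).

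\textbf{Step 2: simple case.} Let $K=F(\theta)$, let $p(t)$ be the minimal polynomial of $\theta$, of degree $n$, and suppose $x=q(v_0+v_1\theta+\dots+v_{n-1}\theta^{n-1})$ with $v_i\in V$. Consider $v(t)=\sum v_i t^i\in V\otimes F[t]$ and $f(t)=q(v(t))\in F[t]$, which satisfies $f(\theta)=x$. If $q$ is isotropic over $F$, then $D(q)=F^\times$ and there is nothing to prove, so assume $q$ is anisotropic. Then the leading coefficient of $f$ is a value of $q$, so $\deg f=2d\le 2n-2$ is even. Moreover, $N_{K/F}(x)=N_{K/F}(f(\theta))$ equals the resultant $\mathrm{Res}(p,f)$ up to sign conventions. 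This resultant equals (leading coefficient of $f$)$^{n}\cdot\prod N(\text{roots})$, which can be expressed through the monic irreducible factors $p_j$ of $f$ as $\prod_j N_{F(\theta_j)/F}(\ldots)$.

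\textbf{Step 3: induction (the main obstacle).} Factor $f=a\,p_1\cdots p_k$ with $a\in D(q)$ and $p_j$ monic irreducible of degree $<n$. By the Cassels--Pfister representation theorem over $F(t)$ combined with anisotropy, each $p_j(\theta_j)$ is a value of $q$ over $F(\theta_j)$: since $q(v(\theta_j))=0$, the form becomes isotropic there, so every element is represented. Applying induction gives $N(\ldots)\in D^{\deg p_j}(q)$. By reciprocity of resultants, $N_{K/F}(f(\theta))=a^{n}\prod_j \pm N_{F(\theta_j)/F}(p(\theta_j))$. Each $p(\theta_j)$ is represented by the isotropic form $q_{F(\theta_j)}$, so by induction its norm lies in $D^{\deg p_j}(q)$. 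Counting factors, we get $n$ from $a^n$ (since $a^n$ is a square times $a$ or a square, and squares are absorbed using Lemma~\ref{square class}) plus $\sum\deg p_j=2d$ further factors. Adjusting the parity is the delicate bookkeeping. I expect the sign and parity control in this resultant identity, namely getting exactly $n$ factors modulo squares, to be the main difficulty. I would handle it by padding with squares $1\cdot c^2\in D(q)\cdot F^{\times2}$ and by pairing factors via Lemma~\ref{square class}.
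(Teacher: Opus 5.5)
\textbf{There is a genuine gap in Step 3.} The irreducible factors $p_j$ of $f(t)=q(v(t))$ need not have degree $<n$. All you know is $\deg f=2d\le 2n-2$, and $f$ can be irreducible of degree $\ge n$. For example, take $q=\langle 1,1\rangle$ over $\mathbb{Q}$, $\theta=\sqrt2$ and $v(t)=(1+t,1)$; then $f(t)=t^2+2t+2$ is irreducible of degree $2=n$.

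So the induction hypothesis cannot be applied to $F(\theta_j)/F$. The resultant identity $N_{K/F}(x)=\pm a^n\prod_j N_{F(\theta_j)/F}(p(\theta_j))$ only trades the problem for extensions of possibly larger degree. Knowing that $q_{F(\theta_j)}$ is isotropic does not help either: the desired statement $N_{F(\theta_j)/F}(p(\theta_j))\in D^{\deg p_j}(q)$ is then just an instance of the theorem for an extension you have not handled.

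The factor count fails for the same reason. Even granting every factor, you land in $D^{2d}(q)$ or $D^{2d+1}(q)$. Since $1=c\cdot c^{-1}\in D^2(q)$, padding with squares only gives $D^k(q)\subseteq D^{k+2}(q)$. It never lowers the number of factors, so no parity bookkeeping brings you down to $D^n(q)$ when $2d>n$.

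Two smaller points. First, $p_j(\theta_j)=0$, so you mean $p(\theta_j)$. Second, isotropy over $F(\theta_j)$ needs $v(\theta_j)\neq 0$, which requires first dividing out the gcd of the coordinates of $v(t)$.

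\textbf{The missing idea.} You should factor the cofactor in the division by $p$, not $q(v(t))$ itself, and for that you need $x$ itself to generate $K$.

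Your worry in Step 1 is resolved without knowing $x\in D(q_E)$. If $[K:E]$ is even, $N_{K/E}(x)=x^{[K:E]}$ is a square and hence lies in $D^{[K:E]}(q_E)$. If $[K:E]$ is odd, Springer's theorem gives $x\in D(q_E)$. Then apply the induction hypothesis for $E/F$ to each of the $[K:E]$ factors.

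When $K=F(x)$, the paper represents $x^{-1}$ and writes $t\,q(f_1(t),\dots,f_d(t))=1+p(t)h(t)$ with $\deg f_i\le n-1$. Then:
\begin{itemize}
\item anisotropy forces $\deg h=2r+1-n\le n-1$;
\item the roots $x_i$ of the irreducible factors of $h$ satisfy $x_i^{-1}\in D(q_{F(x_i)})$ with $[F(x_i):F]<n$;
\item setting $t=0$ gives $p(0)h(0)=-1$.
\end{itemize}
This expresses $N_{K/F}(x)=(-1)^np(0)$ through $h(0)$ using $1+\deg h\le n$ factors, with $1+\deg h$ of the same parity as $n$. That is exactly what makes both the induction and the count work. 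Passing to an arbitrary primitive element $\theta$ with $x=g(\theta)$ discards this low-degree relation.
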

\begin{proof}
If $q$ is isotropic over $F$, then by (\cite{lam}, Chapter 1, Theorem 3.4), $D(q)= F^\times$. Hence, $N_{K/F}(x) \in F^\times = D(q)$.
Suppose $q$ is an anisotropic form over $F$. For $m \geq 1$, define $D^m(q)$ be the set of products of $m$ elements of $D(q)$. Let $d \in D(q)$. Then for any $c \in F^\times$,
 \[c^2= d \cdot ((c/d)^2 \cdot d) \in D(q) \cdot D(q) = D^2(q).\]
 Thus, $F^{\times 2} \subseteq D^2(q)$. More generally, $ F^{\times 2m} \subseteq D^{2m}(q)$. 
 
 We now prove the theorem by induction on $[K:F] = n$. If $n=1$, then $K=F$ and we are done. Assume $n \geq 2 $ in the following.
 
 \textbf{Case i:} Let $x \in  F$. If $[K:F]= 2m$, then 
 \[N_{K/F}(x)=x^{2m} \in F ^{\times 2m} \subseteq D^{2m}(q).\]
 If $[K:F]=2m+1,$ then $N_{K/F}(x)= x^{2m+1} \in x \cdot F^{\times 2m}$. Since $K/F$ is an odd degree extension, by an application of Springer's theorem (see \cite{lam}, Chapter 7, Corollary 2.9), $x \in D(q_K) \implies x \in D(q)$. Hence   \[N_{K/F}(x) \in x \cdot  F^{\times 2m} \subseteq D(q) \cdot D^{2m}(q)=D^n(q)\]
\textbf{Case ii:} Suppose $x \not\in F.$ Let $E=F(x)$ and consider $ F \subseteq E \subseteq K$ with $[K:E]=m,\ [E:F]=m',\ n=mm'.$

Suppose $E \subsetneq K$. Then $m>1$ and by the above case i,
 $N_{K/E}(x) \in D^m(q_E)$ 
 Therefore, $N_{E/F} \cdot  N_{K/E}(x) \in N_{E/F}( D^m(q_E))$
 Hence, \begin{equation} \label{p1}
 N_{K/F}(x) \in N_{E/F}( D^m(q_E)) 
 \end{equation}
As $m> 1$ and so $m'<n$, by induction hypothesis on $E/F$, we have\[ N_{E/F}(x) \in D^{m'}(q),\ \text{for each}\ x \in D(q_E).\] 
Hence, using \ref{p1}, $N_{K/F}(x) \in D^{mm'}(q)=D^n(q)$.

Suppose $E=K$. Let $p(t)$ be the minimal polynomial of $x$ over $F$, so\[
K=F(x) \cong F[t]/(p(t))\]
As $D(q_K)$ is closed under inverses, and $x \in D(q_K)$, we have $x^{-1} \in D(q_K)$.
So there exists $f_1,\dots,f_d \in F[t]$, satisfying \[ q(f_1(x),\dots ,f_d(x)) = x^{-1}.\] 
Since $p(t)$ is the minimal polynomial of $x$ over $F$, we can write
\begin{equation}\label{pm2}
t \cdot q(f_1(t),\dots,f_d(t))= 1+p(t)h(t)
\end{equation}
  where $d= {\rm dim}\ q,\ h(t), f_i(t) \in F[t]$ with $r:= {\rm max} \{{\rm deg}(f_i) \} \leq n-1$. Since, $q$ is anisotropic, \ref{pm2} shows that $n_0:= {\rm deg}(h)=2r+1-n \leq 2(n-1)+1-n = n-1.$

If $h(t)=c \cdot h_1(t) \cdots h_s(t)$, where $c \in F^\times$ and the $h_i$'s are monic irreducible polynomials in $F[t]$, then $c$ is the leading co-efficient of $ 1+p(t)h(t)$. Using \ref{pm2}, we have $c \in D(q)$.
Now if $s=0$ in the decomposition of $h$, i.e, if $h(t)$ is a  constant polynomial $c$, then $n_0=0$ and \[
N_{K/F}(x)= (-1)^n p(0)= (-1)^{n+1}h(0)^{-1} = (-1)^{n+1}c^{-1} = (-1)^{2r+1+1}c^{-1}= c^{-1} \]
Since $c \in D(q)$, we have $N_{K/F}(x) \in D(q)$.
Suppose $s \geq 1$ in the decomposition of $h$, and  $x_i$ is a zero of $h_i$ in an algebraic closure of $F$. Then from (\ref{pm2}), we have \[
x_i^{-1}= q(f_1(x_i),\dots,f_d(x_i)) \in D(q_{F(x_i)})\].
Since $[F(x_i):F] \leq {\rm deg}\ h \leq n-1$, by induction hypothesis on $F(x_i)/F$,\[
N_{F(x_i)/F}(x_i)= (-1)^{deg\ h_i}h_i(0) \in D^{deg\ h_i}(q).\]
Taking the product of these over $i$, we get\[
 (-1)^{n_0}h_1(0) \dotsi h_s(0)= (-1)^{n_0} c^{-1} h(0) \in D^{n_0}(q).\]
Since $c^{-1} \in D(q)$, we have\[
(-1)^{n_0} h(0) \in D^ {(n_0+1)} (q).\]
As $n_0+1 \equiv n\ (mod\ 2)$ and recalling that $ F^{\times 2m} \subseteq D^{\times 2m}(q)$, we have\[
(-1)^{n+1} h(0) \in D^ {n} (q).\] Therefore, $N_{K/F}(x)= (-1)^n p(0)= (-1)^{n+1}h(0)^{-1} \in D^n(q)$. This completes the proof.
\end{proof}
\begin{remark}
    Let $(V,q)$ be a quadratic space over $F$ and let $\Gamma^+ (V,q)$ be the even Clifford group of $(V,q)$. Consider the spinor norm homomorphism ${\rm Sn}: \Gamma^+(V,q) \to \mathbb G_m$. The image of ${\rm Sn}_F$ in $F^\times$ consists of the products of non-zero values of the quadratic form $q$. The kernel of the spinor norm homomorphism is called the spinor group of $(V,q)$ and denoted by ${\rm Spin}(V,q)$. The norm principle for ${\rm Spin}(V,q)$ follows the above Theorem \ref{knebusch norm principle} (see \cite{bm}, Example 3.2).
\end{remark}

\end{document}